\documentclass{amsart}[12pt]
\usepackage{amsmath,amsfonts,amssymb,latexsym,amsthm}
\usepackage{graphicx,epsf}
\usepackage{epsfig}
\usepackage{amssymb}
\usepackage{amsthm}
\usepackage{multicol}
\usepackage{tikz}
\usetikzlibrary{automata,positioning,arrows}
\usepackage{tikz}
\usetikzlibrary{arrows,matrix,positioning}

\voffset=5mm
\oddsidemargin=22pt \evensidemargin=22pt
\headheight=9pt     \topmargin=0pt %26pt%-24pt
\textheight=624pt   \textwidth=413.pt

% Theorem environments with italic font
\newtheorem{thm}{Theorem}% [section]
\newtheorem{lem}[thm]{Lemma}

% Theorem environments with roman or slanted font
\newtheorem{Example}[thm]{Example}

\def\ord{\eta}

\def\bw{{\bold w}}

\def\zz{\mathbb Z}
\def\nn{\mathbb N}

\def\qqq{\mathbb Q}

\def\Ga{{\Gamma}}

\def\la{\lambda}
\def\ga{\gamma}

\def\al{\alpha}
\def\be{\beta}

\def\ca{\mathcal A}

\def\ssu{\subset}

\def\<{\langle}
\def\>{\rangle}

\def\di{\diamond}

\def\ts{\hskip.015cm}

\def\0{{\mathbf 0}}

\def\SL{{\text{\rm SL}}}
\def\GL{{\text{\rm GL}}}
\def\BS{{\text{\rm BS}}}

\def\Cay{{\text {\rm Cay} } }

\def\.{\hskip.06cm}
\def\ts{\hskip.03cm}

\def\nin{\noindent}

\begin{document}

\title[Linear groups and P-recursiveness]{Words in linear groups, random walks,
automata \\ and P-recursiveness}

\author[Scott Garrabrant \and Igor Pak]{Scott~Garrabrant$^{\star}$ \and Igor~Pak$^{\star}$}

\thanks{\thinspace ${\hspace{-.45ex}}^\star$Department of Mathematics,
UCLA, Los Angeles, CA, 90095.
\hskip.06cm
Email:
\hskip.06cm
\texttt{\{coscott,\ts{pak}\}@math.ucla.edu}}

\maketitle

{\hskip5.3cm
\today
}

\vskip1.3cm

\begin{abstract}
Fix a finite set $S \ssu \GL(k,\zz)$.  Denote by $a_n$ the number
of products of matrices in $S$ of length~$n$ that are equal to~1.
We show that the sequence $\{a_n\}$ is not always P-recursive.
This answers a question of Kontsevich.
\end{abstract}

\vskip.4cm

\hskip3.3cm

\vskip.9cm

\section{Introduction}

\medskip

\nin
An integer sequence $\{a_n\}$ is called \emph{polynomially recursive}, or \emph{P-recursive},
if it satisfies a nontrivial linear recurrence relation of the form
$$
(\ast) \qquad
q_0(n)\ts a_n \. + \.  q_1(n)\ts a_{n-1} \. + \. \ldots \. + \. q_k(n)\ts a_{n-k} \. = \.0\.,
$$
for some $q_i(x) \in \zz[x]$, $0\le i \le k$.  The study of P-recursive sequences
plays a major role in modern Enumerative and Asymptotic Combinatorics,
see e.g.~\cite{FS,Ges,Odl,Sta}.  They have \emph{D-finite} (also called \emph{holonomic})
generating series
$$
\ca(t) \. = \. \sum_{n=0}^\infty \. a_n \ts t^n\.,
$$
and various asymptotic properties (see Section~\ref{s:prob} below).

Let $G$ be a group and $\zz[G]$ denote its group ring.  For every
$g\in G$ and $u \in \zz[G]$, denote by $[g] \.u$ the value of $u$ on~$g$.
Let $a_n = [1]\ts u^n$, which denotes the value of $u^n$ at the
identity element.  When $G=\zz^k$ or $G=F_k$, the sequence $\{a_n\}$
is known to be P-recursive for all $u \in \zz[G]$, see~\cite{Hai}.
Maxim Kontsevich asked whether $\{a_n\}$ is always
P-recursive when $G\subseteq\GL(k,\zz)$, see~\cite{Sta-mfo}.
We give a negative answer to this question:

\begin{thm}\label{t:main}
There exists an element $u \in \mathbb{Z}[\SL(4,\mathbb{Z})]$, such that
the sequence $\{\. [1]\ts u^n\ts\}$ is not \ts P-recursive.
\end{thm}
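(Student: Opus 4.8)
\medskip
\noindent\textit{Proposed proof strategy.}\
The plan is to realize \emph{some} non-P-recursive sequence as a value $n\mapsto[1]\ts u^n$ inside a small, explicit subgroup of $\SL(4,\zz)$. First I would fix the embedding $\SL(2,\zz)\times\SL(2,\zz)\hookrightarrow\SL(4,\zz)$ sending a pair of matrices to the block-diagonal matrix built from them (the determinant being $1\cdot1=1$), and recall that each factor $\SL(2,\zz)$ contains a free subgroup of rank two — for instance the one generated by the upper and lower unitriangular $2\times 2$ matrices with off-diagonal entry $2$. Hence $G:=F_2\times F_2$ embeds in $\SL(4,\zz)$, and it suffices to exhibit $u\in\zz[G]$ with $\{[1]\ts u^n\}$ not P-recursive. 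I would also record the elementary closure properties of the family $\mathcal R$ of all sequences $n\mapsto[1]\ts u^n$ with $u\in\zz[H]$ and $H$ a f.g.\ subgroup of $\GL(k,\zz)$, $k\ge1$: $\mathcal R$ is closed under addition, under the Hadamard (termwise) product — replace $H_1,H_2$ by $H_1\times H_2$ and use block-diagonal matrices — and under affine reindexing $n\mapsto mn+r$. These let one assemble the final example from simpler pieces and, in particular, multiply a candidate sequence by an exponential $c^{\,n}$ (realized in $\zz[\zz]$ by $u=c\cdot 1$).

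Second, I would translate the value at the identity over $\zz[F_2\times F_2]$ into a counting problem for automata. Write $u=\sum_{s\in S}c_s\ts s$ with $S\subset F_2\times F_2$ finite and $c_s\in\zz$, and decompose each $s$ as a pair $s=(\alpha_s,\beta_s)$, $\alpha_s$ in the first free factor $\langle x,y\rangle$ and $\beta_s$ in the second $\langle z,w\rangle$. Then
$$
[1]\ts u^n \; = \sum_{\substack{(s_1,\dots,s_n)\in S^n \\ \alpha_{s_1}\cdots\alpha_{s_n}=1 \ \text{and} \ \beta_{s_1}\cdots\beta_{s_n}=1}} c_{s_1}\cdots c_{s_n}\,.
$$
A freely reduced word in two letters that collapses to $1$ is exactly a balanced bracket sequence, i.e.\ the history of a pushdown store that begins and ends empty; so a trivial product in $F_2\times F_2$ is precisely a synchronous, length-$n$ run of a machine driven by a common clock and equipped with two such stores (each generator performs a bounded operation on each store). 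As two stacks — or, in the simpler variant, two counters (Minsky) — already simulate an arbitrary Turing machine, the weighted run-counts $n\mapsto[1]\ts u^n$ form a rich class: rich enough, after an affine change of index and an overall exponential factor, to produce \emph{signed} counts of arithmetic nature.

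Third, I would use this flexibility to choose $u$ so that $\{a_n\}=\{[1]\ts u^n\}$ fails a property shared by all P-recursive sequences. The obstruction I would target is the classical one: the generating series of a P-recursive sequence is D-finite, hence has only finitely many singularities in $\cc$, whereas series such as $\prod_{j\ge1}(1-t^{j})^{\pm1}$ or the Lambert series $\sum_{n\ge1}d(n)\ts t^n$ (with $d$ the divisor function) are singular on a dense subset of the unit circle and therefore not D-finite; by closure under $t\mapsto t/c$, neither is $\sum_{n\ge1}c^{\,n}d(n)\ts t^n$. The aim is to engineer the two-store machine and the weights $c_s$ so that, after reindexing, $[1]\ts u^n$ equals $c^{\,n}$ times a divisor- or partition-type count — e.g.\ $c^{\,n}\cdot\#\{(a,b):a,b\ge1,\ ab=n\}$, obtained by iterated subtraction on two counters — and then to invoke the non-D-finiteness above. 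Crucially, the negative coefficients $c_s$ are not a convenience but a necessity: a nontrivial nonnegative value-at-identity sequence in the non-amenable group $F_2\times F_2$ must grow exponentially, so only cancellation can cut it down to a subexponential residue, and the exponential factor $c^{\,n}$ is then what restores enough size.

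The main obstacle lies in executing this third step in concert with the fidelity of the encoding in the second. One must produce an explicit finite $S\subset F_2\times F_2$ and weights $c_s\in\zz$ such that (i) the synchronization of the two free coordinates faithfully realizes the intended machine, with no accidental extra trivial products corrupting the count; (ii) the signed sum $\sum c_{s_1}\cdots c_{s_n}$ over trivial products collapses to exactly the intended weighted count, so the massive cancellations forced by the negative coefficients are fully controlled; and (iii) the resulting sequence is provably not P-recursive — which is where the structure theory of D-finite series (finiteness of the singular set, equivalently the asymptotic classification recalled in Section~\ref{s:prob}) does the work. Granting (i)--(iii), Theorem~\ref{t:main} follows for the resulting $u\in\zz[\SL(4,\zz)]$.
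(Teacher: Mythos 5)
Your first two steps do match the paper's opening moves: embed a product of free groups into $\SL(4,\zz)$ via block-diagonal $\SL(2,\zz)\times\SL(2,\zz)$ (Sanov-type free subgroups), and read $[1]\ts u^n$ as a count of length-$n$ synchronized runs of a machine whose two free coordinates act as pushdown stores. The genuine gap is your step three, which is where all the difficulty lives and which you leave unexecuted. You propose to pick signed weights $c_s$ so that the signed sum over trivial products collapses \emph{exactly} to $c^{\,n}d(n)$ (or a partition-type count) and then invoke the finitely-many-singularities property of D-finite series. But no mechanism is given for your point (ii): in $F_2\times F_2$ the number of spurious trivial products of length $n$ grows exponentially, and to force the value at the identity to equal a prescribed subexponential quantity times $c^{\,n}$ you would need the cancellations among all of them to be exact --- an involution-style argument you do not supply and which is not known to be achievable for a divisor-type target. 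Your claim that negative coefficients are \emph{necessary} is an artifact of choosing an asymptotic/analytic obstruction; as a statement about the theorem it is false: the paper's element $u$ has nonnegative coefficients (one coefficient $2$ plus nineteen $1$'s), and $[1]\ts u^n$ does grow exponentially. (Also, some of the closure properties you assert in passing are shaky: Hadamard product via $H_1\times H_2$ is fine, but addition and the shift in affine reindexing are not obviously realizable in this class.)

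The paper sidesteps exact control entirely by replacing the analytic obstruction with an arithmetic one: a lemma showing that the parity word $w_n=a_n \bmod 2$ of any P-recursive integer sequence must omit some finite binary subword. It then builds an explicit two-stack automaton whose states are encoded as conjugators $s_i^{-1}r\ts s_j$ inside $F_{11}\times F_3$ --- this is exactly what resolves your concern (i), and it needs the extra state generators plus a dedicated argument that no product has trivial ``state'' component --- so that accepting path lengths spell out every binary word, and the single coefficient $2$ makes all multi-use garbage vanish modulo $4$; thus only a residue mod $2$, not an exact value, has to be controlled. To complete your route you would have to either exhibit a concrete signed $u$ with a proof that $[1]\ts u^n$ equals $c^{\,n}$ times a non-D-finite kernel with no error term, or weaken the needed control to a congruence and prove a corresponding obstruction --- which is precisely the paper's parity lemma. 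As written, your (i)--(iii) restate the theorem rather than prove it.
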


We give two proofs of the theorem.  The first proof is completely
self-contained and based on ideas from computability.  Roughly, we
give an explicit construction of a finite state automaton with two
stacks and a non-P-recursive sequence of accepting path lengths
(see Section~\ref{s:auto}).  We then convert this automaton into
a generating set~$S \ssu \SL(4,\zz)$, see Section~\ref{s:proof}.
The key part of the proof is a new combinatorial lemma giving
an obstruction to P-recursiveness (see Section~\ref{s:parity}).

\smallskip

Our second proof of Theorem~\ref{t:main} is analytic in nature,
and is the opposite of being self-contained.
We interpret the problem in a probabilistic language, and use
a number of advanced and technical results in Analysis, Number Theory,
Probability and Group Theory to derive the theorem.
Let us briefly outline the connection.

Let~$S$ be a generating set of the group~$G$.  Denote by $p(n) = p_{G,S}(n)$
the probability of return after $n$ steps of a random walk
on the corresponding Cayley graph $\Cay(G,S)$.  Finding the
asymptotics of~$p(n)$ as $n\to \infty$ is a fundamental problem
in probability, with a number of both classical and recent results
(see e.g.~\cite{Pete,Woe}).
In the notation above, we have:
$$
p(n) \. = \. \frac{a_n}{|S|^n}\,, \quad \text{where} \quad  a_n \. =
\. [1] \ts u^n \quad \text{and} \quad
u \. = \. \sum_{s\in S} \. s\ts.
$$
Since P-recursiveness of $\{a_n\}$ implies P-recursiveness of~$\{p(n)\}$,
and much is known about the asymptotic of both $p(n)$ and P-recursive sequences,
this connection can be exploited to obtain non-P-recursive examples
(see Section~\ref{s:prob}).  See also Section~\ref{s:fin} for
final remarks and historical background behind the two proofs.

\bigskip

\section{Parity of P-Recursive Sequences}\label{s:parity}

\medskip

\nin
In this section, we give a simple obstruction to P-recursiveness.

\begin{lem}\label{l:mod2}
Let $\{a_n\}$ be a P-recursive integer sequence. Consider an
infinite binary word ${\bold w}=w_1w_2\ldots$ defined by
$w_n= a_n\,\,{\rm mod}\,\, 2$.  Then, there exists a finite
binary word~$v$ which is not a subword of~$w$.
\end{lem}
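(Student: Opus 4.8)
The plan is to bound the number of distinct length-$\ell$ subwords (contiguous factors) of ${\bold w}$ and show it is eventually strictly smaller than $2^\ell$, which forces some length-$\ell$ binary word to be absent. First I would normalize the recurrence $(\ast)$: after reindexing we may assume $q_0\ne 0$, and after dividing the relation by the largest power of $2$ dividing every value $q_i(n)$ we may assume the $q_i$ are integer-valued polynomials (possibly no longer in $\mathbb{Z}[x]$, but this does not change the notion of P-recursiveness) with $q_i(n)$ odd for some $i$ and some $n$. Then each function $n\mapsto \overline{q_i}(n):=q_i(n)\bmod 2$ is periodic with some fixed period $P$, so reducing $(\ast)$ modulo $2$ gives, for every residue $r\bmod P$ and every $n\equiv r\pmod P$ with $n\ge n_0$, a fixed $\mathbb{F}_2$-linear relation $\sum_{i=0}^{k}\overline{q_i}(r)\,\overline{a}_{n-i}=0$ among $\overline{a}_n,\dots,\overline{a}_{n-k}$; by construction this relation is nontrivial for at least one residue $r=r^{\ast}$.

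The key step is that this single nontrivial relation already constrains a positive density of the bits of ${\bold w}$. Let $j^{\ast}$ be the largest $i$ with $\overline{q_i}(r^{\ast})=1$; for $n\equiv r^{\ast}\pmod P$ the relation then solves for $\overline{a}_{n-j^{\ast}}$ as a fixed $\mathbb{F}_2$-linear function of the $j^{\ast}$ values $\overline{a}_{n-j^{\ast}+1},\dots,\overline{a}_{n}$ at strictly larger index (for $j^{\ast}=0$ it just forces $\overline{a}_n=0$). Hence in any window $[N,N+\ell-1]$ with $N\ge n_0$, every position $p\equiv r^{\ast}-j^{\ast}\pmod P$ with $p\le N+\ell-1-j^{\ast}$ — at least $(\ell-j^{\ast})/P-1$ of them — is determined by positions of strictly larger index inside the same window. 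Since this dependence always points rightward there is no circularity, so, scanning from right to left, one recovers the whole factor from its restriction to the remaining ``free'' positions; the location of those positions depends only on $N\bmod P$ and $\ell$, and there are at most $\ell(1-1/P)+O(1)$ of them.

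Counting, the number of length-$\ell$ subwords of ${\bold w}$ is therefore at most $n_0$ (those starting at positions $<n_0$) plus $P\cdot 2^{\ell(1-1/P)+O(1)}$, which is $<2^\ell$ for $\ell$ large; fixing such an $\ell$ and any length-$\ell$ binary word not occurring finishes the proof. The step I expect to be the real obstacle is the normalization: if one insists on $q_i\in\mathbb{Z}[x]$ it can happen that every coefficient polynomial takes only even values, making the reduction of $(\ast)$ mod $2$ vacuous, which is exactly why one must allow integer-valued polynomial coefficients and divide out the common power of $2$. Everything else is bookkeeping — the finitely many exceptional indices $n<n_0$ cost only the additive $n_0$, and the right-to-left reconstruction is well defined because each determined position refers only to positions further right.
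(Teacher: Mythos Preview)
Your argument is correct, and it takes a genuinely different route from the paper's proof. The paper works with the $2$-adic valuation $\eta$ directly: it fixes a residue $\ell$ modulo a large power of~$2$, picks the index $d>0$ minimizing $\eta(q_d(\ell))$, and shows that whenever the pattern $0^{k-d}10^{d-1}$ occurs in $\bold w$ ending at a position $n\equiv \ell$, the next bit $w_n$ is forced (equal to $1$ iff $\eta(q_d(\ell))=\eta(q_0(\ell))$). From this it writes down an \emph{explicit} forbidden word $v=(0^{k-d}10^k10^{d-1})^{2^m}$.

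Your approach instead reduces the recurrence modulo~$2$ after clearing the common $2$-power from the coefficient polynomials, obtains a nontrivial $\mathbb F_2$-linear relation valid along one residue class modulo a period~$P$, and uses it to bound the factor complexity: $c_{\bold w}(\ell)\le n_0+P\cdot 2^{\ell(1-1/P)+O(1)}<2^\ell$ for large~$\ell$. Two remarks on what each buys. First, your argument is quantitatively stronger: it shows $c_{\bold w}(\ell)=O\bigl(2^{(1-1/P)\ell}\bigr)$, which is exactly the kind of sharper bound the paper asks about in its final remarks. Second, the paper's argument is constructive, producing a concrete missing word, while yours is existential. Your handling of the normalization is the right move and is indeed the delicate point: dividing out the common $2$-power may force you into integer-valued (not $\mathbb Z[x]$) coefficients, and you need that $q_i(n)\bmod 2$ is still periodic for such polynomials --- which it is, with period a power of~$2$, by writing $q_i$ in the binomial basis and invoking Lucas' theorem. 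The right-to-left reconstruction is well defined because each determined position depends only on the $j^\ast$ positions immediately to its right, all of which lie in the window by your choice $p\le N+\ell-1-j^\ast$.
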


\begin{proof}
Let $\ord(n)$ denote the largest integer~$r$ such that~$2^r|n$.
By definition, there exist polynomials \ts
$q_0,\ldots,q_k\in\zz[n]$, such that
$$
a_n\,= \, \frac{1}{q_0(n)} \bigl(a_{n-1}\ts q_1(n)\. + \. \ldots\.
+\. a_{n-k} \ts q_k(n)\bigr), \ \ \, \text{for all} \ \, n> k\ts.
$$
% Without loss of generality, we may assume that $p_0,\ldots,p_k\in\mathbb{Z}[n]$.
% Indeed, since $A_n$ is an integer sequence, the coefficients in $p_i$ are a
% nontrivial solution to a system of linear equations over $\mathbb{Q}$.
% Thus, there must exist a nontrivial solution to this system within $\mathbb{Q}$,
% so we may assume that $p_0,\ldots,p_k\in\mathbb{Q}[n]$.
% Multiplying all of the polynomials by a large integer allows us to assume that
% $p_0,\ldots,p_k\in\mathbb{Z}[n]$.

Let $\ell$ be any integer such that \ts $q_i(\ell)\neq 0$ \ts for all~$i$.
Similarly, let~$m$ be the smallest integer such that \ts $2^m>k$, and \ts $m>\ord(q_i(\ell))$
for all~$i$. Finally, let $d>0$ be such that
$\ord(q_d(\ell))\leq \ord(q_i(\ell))$ for all $i>0$.

Consider all $n$ such that:
\begin{equation}
n=\ell~\text{mod}~2^m,\quad w_{n-d}=1 \quad \text{and} \quad w_{n-i}=0\ \ \, \text{for all} \ \  i\neq 0,\ts d\ts.\tag{$\star$}
\end{equation}
Note that $\ord(q_i(n))=\ord(q_i(\ell))$ for all~$i$,
since $q_i(n)= q_i(\ell)\mbox{ mod }2^m$ and $\ord(q_i(\ell))<m.$
We have
$$
\ord(a_n)=\ord\Big(a_{n-1}q_1(\ell)+\ldots+a_{n-k}q_k(\ell)\Big)-\ord(q_0(\ell)).
$$
Since $\ord(a_{n-d}q_d(\ell))<\ord(a_{n-i}q_i(\ell))$ for all $i\neq d,$ this implies that
$$
\ord(a_n)=\ord(a_{n-d}q_d(\ell))-\ord(q_0(\ell))=\ord(q_d(\ell))-\ord(q_0(\ell)).
$$
Therefore, $w_n=1$ if and only if $\ord(q_d(\ell))=\ord(q_0(\ell))$.
This implies that~$w_n$ is independent of~$n$, and must be the same for all~$n$
satisfying~$(\star)$. In particular, this means that at least one of the words
$0^{k-d}10^{d-1}1$ and $0^{k-d}10^{d}$ cannot appear in ${\bold w}$ ending at
a location congruent to $\ell$ modulo~$2^m$.

Consider the word $v=(0^{k-d} 1 {0^k} 1 0^{d-1})^{2^m}$. Note that
$0^{k-d} 1 {0^k} 1 0^{d-1}$ has odd length, and contains both
$0^{k-d}10^{d-1}1$ and $0^{k-d}10^{d}$ as subwords.
Therefore, the word~$v$ contains both $0^{k-d}10^{d-1}1$ and $0^{k-d}10^{d}$
in every possible starting location modulo $2^m$.
This implies that~$v$ cannot appear as a subword of ${\bold w}$.
\end{proof}

\bigskip

\section{Building an Automaton} \label{s:auto}

\medskip

\nin
In this section we give an explicit construction of
a finite state automaton with the number of accepting
paths given by a binary sequence which does not satisfy
conditions of Lemma~\ref{l:mod2}.

\smallskip

Let $X\simeq F_3$ be the free group generated by $x$, $1_x$, and $0_x$.
Similarly, let $Y\simeq F_3$ be the free group generated by $y$, $1_y$, and $0_y$.
We assume that $X$ and~$Y$ commute.

Define a directed graph $\Gamma$ on vertices $\{s_1,\ldots,s_8\}$, and
with edges as shown in Figure~1.
Some of the edges in $\Gamma$ are labeled with elements of $X$, $Y$,
or both.
For a path $\gamma$ in $\Gamma$, denote by $\omega_X(\gamma)$
the product of all elements of $X$  in $\gamma$, and by
$\omega_Y(\gamma)$ denote the product of all elements of $Y$ in~$\gamma$.
By a slight abuse of notation, while traversing~$\gamma$ we will
use $\omega_X$ and $\omega_Y$ to refer to the product of all elements of~$X$
and~$Y$, respectively, on edges that have been traversed so far.

Finally, let $b_n$ denote the number of paths in $\Gamma$ from $s_1$ to $s_8$
of length~$n$, such that $\omega_X(\gamma)=\omega_Y(\gamma)=1$.
For example, the path
$$
\ga\.: \ \,
s_1\xrightarrow{\scriptsize xy} s_1\rightarrow s_2\xrightarrow{1_yx^{-1}}
s_4\xrightarrow{1_y^{-1}1_x} s_4\xrightarrow{y^{-1}} s_5\rightarrow s_6\xrightarrow{1_x^{-1}}s_8
$$
is the unique such path of length~$7$, so $b_7=1$.

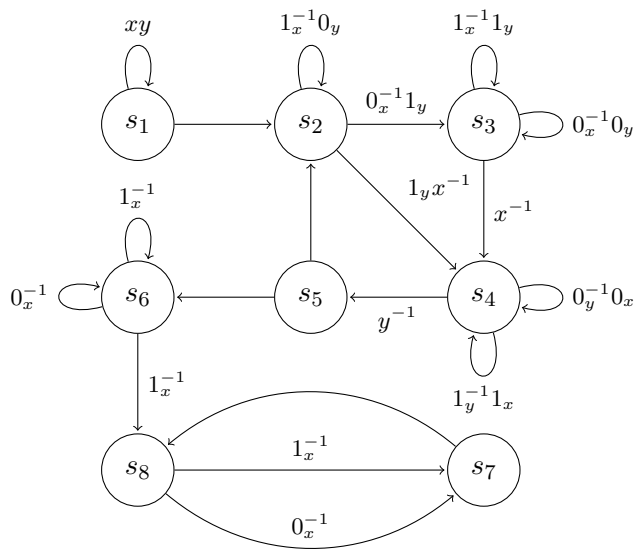
\begin{figure}[hbt]
\label{gamma}
\tiny
\begin{tikzpicture}[shorten >=1pt,node distance=2.3cm,on grid,auto]
      \large
      \node[state] (a)   {$s_1$};
   \node[state] (b) [right=of a] {$s_2$};
   \node[state] (c) [right=of b] {$s_3$};
   \node[state](d) [below =of c] {$s_4$};
   \node[state](e) [left=of d] {$s_5$};
   \node[state](f) [left=of e] {$s_6$};
   \node[state](g) [below=of d] {$s_7$};
   \node[state](h) [below=of f] {$s_8$};
    \path[->]
    (a) edge  node {} (b)
          edge  [loop above] node {\small $xy$} ()
    (b) edge  node  {\small $0_x^{-1}1_y$} (c)
    edge  node  {\small $1_yx^{-1}$} (d)
          edge [loop above] node {\small $1_x^{-1}0_y$} ()
    (c) edge  node {\small $x^{-1}$} (d)
          edge [loop above] node {\small $1_x^{-1}1_y$} ()
          edge [loop right] node {\small $0_x^{-1}0_y$} ()
    (d) edge  node {\small $y^{-1}$} (e)
          edge [loop below] node {\small $1_y^{-1}1_x$} ()
          edge [loop right] node {\small $0_y^{-1}0_x$} ()
    (e) edge node {} (b)
          edge node {} (f)
    (f) edge  node {\small $1_x^{-1}$} (h)
          edge [loop above] node {\small $1_x^{-1}$} ()
          edge [loop left] node {\small $0_x^{-1}$} ()
    (g) edge [bend right=40] node {} (h)
    (h) edge node {\small $1_x^{-1}$} (g)
         edge [bend right=40] node {\small $0_x^{-1}$} (g);
\end{tikzpicture}
\caption{The graph $\Gamma$.}
\end{figure}

\begin{lem}
For every $n\ge 1$ we have $b_n \in \{0,1\}$.
Moreover, every finite binary word
is a subword of \ts ${\bold b}=b_1b_2\ldots$
\end{lem}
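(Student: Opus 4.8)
The plan is to analyze the graph $\Gamma$ as a device that computes, path by path, the binary expansions of successive integers, reading them off in a suitable way so that $\bb$ encodes the concatenation of all binary strings. The key structural observation is that $\Gamma$ decomposes into a ``counter'' part (vertices $s_1,\ldots,s_6$, together with the self-loops labelled by $x,y,0_x,1_x,0_y,1_y$) and a ``readout'' part (vertices $s_7,s_8$). First I would show that an accepting path $\ga$ from $s_1$ to $s_8$ with $\om_X(\ga)=\om_Y(\ga)=1$ is, up to the free-reduction constraint, essentially rigid: the group elements $x,0_x,1_x$ (and their $y$-counterparts) are formal symbols generating free groups, so requiring $\om_X(\ga)=1$ forces the sequence of $X$-labels along $\ga$ to freely reduce to the identity, and similarly for $Y$. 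Since $X$ and $Y$ commute but are otherwise free, the two conditions are independent. I would track what word in $\{0,1\}$ gets ``written'' by the loops at $s_1$ through $s_3$ and then ``read and checked'' against itself as the path cycles back through $s_5\to s_2$ and finally exits through $s_6$, $s_8$, $s_7$; the $1_x,0_x$ labels on the $s_7$--$s_8$ edges are what enforce that the readout matches. The upshot I am aiming for: for each $n$ there is at most one accepting reduced path of length $n$ (giving $b_n\in\{0,1\}$), and the lengths $n$ with $b_n=1$ correspond bijectively to ``checkpoints'' in an enumeration where the automaton has just verified one more binary string, so that $\bb$ restricted to these structured blocks realizes, as $n$ grows, the concatenation $1\,10\,11\,100\,101\,\ldots$ of all binary representations.

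The concrete steps, in order, would be: (1) verify $b_n\le 1$ by a rigidity argument — assume two distinct reduced accepting paths of the same length, look at the first vertex where they diverge, and derive a contradiction with either the $X$-reduction, the $Y$-reduction, or the length; the commuting-but-free structure of $X\times Y$ is exactly what makes the labels injective enough to pin down the path. (2) Give an explicit recursive description of the unique accepting path of each admissible length: start by pumping the $s_1$ self-loop $xy$ some number $k$ of times (this ``lays down'' a number $k$), then route through $s_2,s_3$ with the $0$/$1$ loops to write out, in reverse, a binary string, then loop via $s_5\to s_2$ to re-read and cancel, and finally cascade through $s_6\to s_8\to s_7$ where the trailing $1_x^{-1}$/$0_x^{-1}$ labels finish the cancellation of $\om_X$. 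I would compute, as the sample path $\ga$ of length $7$ in the text already illustrates, exactly which lengths $n$ are admissible and what the resulting bit $b_n$ is. (3) Conclude that every finite binary word $v$ appears in $\bb$: since the enumeration sweeps through \emph{all} binary strings of every length, in particular all strings having $v$ as a factor, and the encoding of each string sits as a contiguous block of $\bb$ in a length-controlled way, $v$ is a subword of $\bb$.

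The main obstacle I expect is step (1) together with making step (2) precise: one has to be careful that the \emph{reduced-word} conditions $\om_X(\ga)=1$ and $\om_Y(\ga)=1$, not merely ``total exponent zero'', are what is being imposed, and that the bookkeeping of partial products $\om_X,\om_Y$ along a path (as the paper flags with its ``slight abuse of notation'') never allows an unintended cancellation that would create a second accepting path or skip a length. Concretely, the danger is a path that wanders between the counter block and the readout block in a non-canonical order yet still freely reduces; ruling this out requires showing that every excursion out of the ``intended'' route leaves an uncancellable letter in $\om_X$ or $\om_Y$. Once the rigidity is nailed down, the combinatorics of ``the $n$-th admissible length carries the $n$-th bit of the string $1\,10\,11\,100\,\ldots$'' is routine, and the second assertion of the lemma — universality of subwords — follows immediately, which is precisely what is needed so that $\bb$ violates the conclusion of Lemma~\ref{l:mod2} and hence is not the mod-$2$ reduction of any P-recursive sequence.
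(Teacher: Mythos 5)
Your reading of $\Gamma$ as a machine that counts in binary is in the same spirit as the paper's argument, but the proposal has a genuine gap at exactly the point you flag as the main obstacle, and the tool you propose there would not close it. The claim $b_n\le 1$ cannot be obtained by a ``first divergence plus free reduction'' contradiction: distinct accepting paths really do exist for the same $k$ (one for each binary digit of $k$ equal to $1$, according to where the path takes the edge $s_6\to s_8$), and also for different $k$, and nothing in the $X$- or $Y$-reduction distinguishes them; the only thing separating them is total length. So the heart of the lemma is a quantitative step your plan only promises: parametrize every accepting path by the pair $(k,j)$, where $k$ is the number of $s_1$-loops and $j$ is the number of digits left on $\omega_X$ when the edge $s_6\to s_8$ is taken, and compute $|\gamma|=L(k,j)=j+6k+2\sum_{i=1}^{k}\lfloor\log_2 i\rfloor$. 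Uniqueness of lengths then follows from $L(k,j+1)=L(k,j)+1$ together with $L(k+1,1)>L(k,j)$ for all admissible $j\le\lfloor 1+\log_2 k\rfloor$; without this (or an equivalent monotonicity statement) you get neither $b_n\in\{0,1\}$ nor control over which $n$ have $b_n=1$.

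Relatedly, your description of the readout omits the mechanism that makes a block of $b_1b_2\ldots$ spell out $k$ in binary: after the counter reaches $k$, digits are erased at $s_6$, the path may jump to $s_8$ only while erasing a $1_x$ (the label of $s_6\to s_8$), and from $s_8$ the remaining digits are erased at \emph{half speed}, two edges per digit, through $s_7$ and $s_8$. This half-speed trick is what makes the length increase by exactly one per unit of $j$, so that the consecutive positions $L(k,1),\ldots,L(k,\lfloor 1+\log_2 k\rfloor)$ carry precisely the binary digits of $k$. Your picture of ``checkpoints, each verifying one more binary string'' is not what happens: each integer $k$ contributes several accepting paths, one per $1$-digit, and the subword claim needs exactly the statement that the binary expansion of every $k$ occupies a contiguous block (the blocks for distinct $k$ being separated by zeros), after which every finite binary word is a factor of the expansion of some integer (prepend a $1$). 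Finally, the structural rigidity you defer --- every visit to $s_4$ must transfer the $0_y,1_y$ suffix to $0_x,1_x$, every passage through $s_2,s_3$ must implement an increment, and no other wandering admits full cancellation --- must be established before the $(k,j)$-parametrization is legitimate; the paper carries this out explicitly, while the proposal leaves both this and the length bookkeeping as announced intentions rather than proofs.
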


\begin{proof}
To simplify the presentation, we split the proof into two parts.

\medskip

\nin
{\bf {\small (a)}~The structure of paths.} \ts
Let $\gamma$ be a path from $s_1$ to~$s_8$.  Denote by~$k$ the number
of times $\gamma$ traverses the loop $s_1\xrightarrow{xy}s_1$.
The value of $\omega_X$ after traversing these $k$ loops is $x^k$,
and the value of $\omega_Y$ is $y^k$.
	
There must be $k$ instances of the edge $s_4\xrightarrow{y^{-1}}s_5$
in $\gamma$ to cancel out the $y^k$. Further, any time the path
traverses this edge, the product~$\omega_Y$ must change from some $y^j$ to $y^{j-1}$,
with no $0_y$ or $1_y$ terms. Therefore, every time $\gamma$ enters
the vertex $s_4$, it must traverse the two loops
$s_4\xrightarrow{1_y^{-1}1_x}s_4$ and $s_4\xrightarrow{0_y^{-1}0_x}s_4$
enough to replace any $0_y$ and $1_y$ terms in $\omega_Y$ with $0_x$
and $1_x$ terms in $\omega_X$. This takes the binary word at the end
of $\omega_Y$, and moves it to the end of $\omega_X$ in the reverse order.

Similarly, any time $\gamma$ traverses the edge $s_3\xrightarrow{x^{-1}}s_4$
or $s_2\xrightarrow{1_yx^{-1}}s_4$, the product~$\omega_X$ must change from some
$x^j$ to $x^{j-1}$, with no $0_x$ or $1_x$ terms.
Every time $\gamma$ enters the vertex $s_2$, it must remove all
$0_x$ and $1_x$ terms from $\omega_X$ before transitioning to $s_4$.
The $s_2$ and $s_3$ vertices ensure that as this binary word is deleted
from $\omega_X$, another binary word is written at the end of $\omega_Y$
such that the reverse of the binary word written at the end of $\omega_Y$
is one greater as a binary integer than the word removed from the end
of~$\omega_X$.
	
Every time $\gamma$ traverses the edge $s_4\xrightarrow{y^{-1}}s_5$,
the number written in binary at the end of $\omega_X$ is incremented by one.
Thus, after traversing this edge $k$ times, the $X$ word will consist
of $k$ written in binary, and $\omega_Y$ will be the identity.
At this point, $\gamma$ will traverse the edge $s_5\xrightarrow{y^{-1}}s_6$.
	
After entering the vertex $s_6$, all of the $0_x$ and $1_x$ terms
from $\omega_X$ will be removed. Each time a $1_x$ term is removed,
$\gamma$ can move to the vertex $s_8$. From $s_8$, the $0_x$ and $1_x$
terms will continue to be removed, but $\gamma$ will traverse two edges
for every term removed, thus moving at half speed. After all of these
terms are removed, the products $\omega_X(\gamma)$ and $\omega_Y(\gamma)$
are equal to identity, as desired.

\smallskip

\nin
{\bf {\small (b)}~The length of paths.} \ts
Now that we know the structure of paths through~$\Gamma$,
we are ready to analyze the possible lengths of these paths.
There are only two choices to make in specifying a path $\gamma$~:
first, the number $k=k(\gamma)$ of times the loop from $s_1$ to
itself is traversed, and second, the number $j=j(\gamma)$ of digits
still on $\omega_X(\gamma)$ immediately before traversing the edge
from $s_6$ to $s_8$. The number $j$ must be such that the $j$-th
binary digit of $k$ is a~1.
	
When $\gamma$ reaches $s_5$ for the first time, it has traversed $k+4$ edges.
In moving from the $i$-th instance of $s_5$ along $\gamma$ to the
$(i+1)$-st instance of $s_5$, the number of edges traversed is
$3+\lfloor 1+\log_2(i)\rfloor+\lfloor 1+\log_2(i+1)\rfloor$,
three more than the sum of the number of binary digits in $i$ and $i+1$.
Therefore, the number of edges traversed by the time $\gamma$ reaches~$s_6$
is equal to
$$
k+5+\sum_{i=1}^{k-1}(3+\lfloor 1+\log_2(i)\rfloor+\lfloor 1+\log_2(i+1)\rfloor).
$$

If $j=1$, the edge from $s_6$ to $s_8$ is traversed at the last possible
opportunity and $\lfloor 1+\log_2(k)\rfloor$ more edges are traversed.
However, if $j>1$, there are an additional $j-1$ edges traversed,
since the $s_7$ and $s_8$ states do not remove $\omega_X$ terms as
efficiently as $s_6$. In total, this gives
$|\gamma|=L(k(\gamma),j(\gamma))$, where
$$
L(k,j) \. = \. j-1+ \ts \lfloor 1+\log_2(k)\rfloor \ts +k+5+ \. 
\sum_{i=1}^{k-1}\.\Bigl(3+\ts \lfloor 1+\log_2(i)\rfloor\ts +\ts \lfloor 1+\log_2(i+1)\rfloor\Bigr).
$$	
This simplifies to
$$
L(k,j) \. = \. j+6k+ 2\ts \sum_{i=1}^{k}\.\lfloor \log_2i\rfloor\..
$$
Since $1\leq j\leq \lfloor 1+\log_2(k)\rfloor$, we have $L(k+1,1)>L(k,j)$
for all possible values of~$j$. Thus, there are no two paths of the same length,
which proves the first part of the lemma.

Furthermore, we have $b_n=1$ if and only if $n=L(k,j)$
for some $k\geq1$ and $j$ such that the $j$-th binary digit of $k$ is a~1.
Thus, the binary subword of ${\bold b}$ at locations $L(k,1)$ through
$L(k,\lfloor{1+\log_2(k)}\rfloor)$ is exactly the integer~$k$ written in binary.
This is true for every positive integer $k$, so ${\bold b}$ contains
every finite binary word as a subword.
\end{proof}

\begin{Example} {\rm 
For $k=3$ and $j=2$, we have $L(k,j)=24$. This corresponds to the
unique path in~$\Gamma$ of length~24:
$$
\aligned 
s_1\xrightarrow{xy} s_1 \xrightarrow{xy} s_1\xrightarrow{xy} s_1\rightarrow s_2
\xrightarrow{1_yx^{-1}} s_4\xrightarrow{1_y^{-1}1_x} s_4\xrightarrow{y^{-1}} s_5\rightarrow s_2 
\hskip2.cm \\
\xrightarrow{1_x^{-1}0_y}s_2\xrightarrow{1_yx^{-1}} s_4\xrightarrow{1_y^{-1}1_x} s_4
\xrightarrow{0_y^{-1}0_x} s_4\xrightarrow{y^{-1}} s_5\rightarrow s_2
\xrightarrow{0_x^{-1}1_y}s_3\xrightarrow{1_x^{-1}1_y} s_3
\\ 
\hskip1.7cm
\xrightarrow{x^{-1}} s_4\xrightarrow{1_y^{-1}1_x} s_4\xrightarrow{1_y^{-1}1_x} s_4
\xrightarrow{y^{-1}} s_5\rightarrow s_6\xrightarrow{1_x^{-1}}s_8
\xrightarrow{1_x^{-1}}s_7\rightarrow s_8\ts.
\endaligned
$$
}
\end{Example}

\bigskip

\section{Proof of Theorem~\ref{t:main}} \label{s:proof}

\subsection{From automata to groups}
We start with the following technical lemma.

\begin{lem}\label{l:DFA}
Let $G=F_{11}\times F_{3}$. Then there exists an element $u\in \mathbb{Z}[G]$,
such that \ts $[1]\ts u^{2n+1}$ \ts is always even, and
${\bold w}=w_1w_2\ldots$ given by
$w_n= \left(\frac{1}{2} \. [1]\ts u^{2n+1}\right)\,{\rm mod}\, 2$, is an infinite binary word
that contains every finite binary word as a subword.
\end{lem}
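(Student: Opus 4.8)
The idea is to convert the automaton $\Gamma$ of the previous section into a group-ring element. The automaton $\Gamma$ has $8$ vertices and its edges are labelled by elements of $X \times Y \simeq F_3 \times F_3$. I would encode the ``state'' part of $\Gamma$ by a separate free group, and the ``tape'' part by $X\times Y$; the element $u$ will be a sum of terms, one for each edge of $\Gamma$, each term being a product of a generator (and/or its inverse) recording the state transition, together with the $X$- and $Y$-labels on that edge. Concretely, take $G = F_{11}\times F_3$, where the $F_3$ factor realizes $X$ (or $Y$ — since $X$ and $Y$ commute and each is a copy of $F_3$, one can fold both into a single $F_3\times F_3$, but a cleaner bookkeeping uses $F_{11}$ for states plus tape-in-$X$ and $F_3$ for tape-in-$Y$; the precise split is a routine matter of counting how many free generators the construction consumes). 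The point is that a word $u^N = 1$ in $\zz[G]$ corresponds exactly to a closed combinatorial object that, after projecting away the state coordinate, is a path in $\Gamma$ from $s_1$ to $s_8$ with $\omega_X(\gamma)=\omega_Y(\gamma)=1$ — i.e.\ the $b_n$ of the previous lemma — except that we must be careful about which endpoints are allowed and about spurious cancellations.

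The standard trick to kill spurious cancellations and to pin down the endpoints is to pad: introduce extra free generators $t, t^{-1}$ (or a pair of ``clock'' letters) so that reading a valid accepting path of length $n$ in $\Gamma$ forces exactly one prescribed contribution to $[1]u^{N}$, while every non-accepting or non-returning combination contributes $0$. I would set $u = \sum_{e \in E(\Gamma)} g_e$, where $g_e$ for an edge $e\colon s_i \to s_j$ with labels $\omega_X(e), \omega_Y(e)$ is $g_e = \sigma_i^{-1}\,\omega_X(e)\,\omega_Y(e)\,\sigma_j$ with $\sigma_1,\dots,\sigma_8$ distinct free generators (plus, since $\Gamma$ is not Eulerian in the needed sense, one extra generator to mark the start/end so that $[1]u^N$ counts only paths $s_1 \rightsquigarrow s_8$). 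Then $[1]\ts u^n = b_n$ by a telescoping argument: in any product $g_{e_1}\cdots g_{e_n}$ the state-generators cancel in pairs iff the edges form a walk, and the surviving factor is $\sigma_1^{-1}(\prod \omega_X)(\prod\omega_Y)\sigma_8$, which is trivial iff the walk is closed at the identity in $X\times Y$. The factor of $2$ and the restriction to odd indices $2n+1$ come from the fact that each contributing path can be read in exactly one way but the ``half-speed'' portion of $\Gamma$ (the $s_7,s_8$ shuttle) together with an added reflection symmetry doubles the count and shifts parity; alternatively one inserts a single central involution $z$ with $z^2=1$ — but $G$ is torsion-free, so instead one uses a generator $r$ and the identity $[1](g+g^{-1})^{2n+1}$-type bookkeeping to produce the factor $2$ and odd length. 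I would verify that the length statistic of $\Gamma$ (the function $L(k,j)$ computed above) is odd-valued after the padding, or rather adjust the padding so that every accepting path has odd total $u$-length; this is where the constant ``$11$'' in $F_{11}$ is chosen: $8$ state letters plus $3$ tape letters for one of $X,Y$, with the other copy of $F_3$ being the second factor.

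The main obstacle is \textbf{ruling out unintended cancellations}: when one expands $u^{2n+1}$ as a sum of $|E(\Gamma)|^{2n+1}$ group elements, one must show that the only terms equal to $1\in G$ are those coming from genuine accepting, $(\omega_X,\omega_Y)$-trivial paths of $\Gamma$, and that each such path is counted exactly twice (hence the $\frac12$ and the ${\rm mod}\ 2$). Because $G$ is a direct product of free groups, a word is trivial iff each coordinate is trivially reducible, and freeness forbids any cancellation between the $\sigma_i$'s except consecutive inverse pairs — so the state coordinate forces the edge sequence to be an honest walk, and then the tape coordinates force $\omega_X=\omega_Y=1$. Once that is established, the lemma follows immediately: by the previous lemma $\{b_n\}$ takes values in $\{0,1\}$ and $\bb = b_1 b_2 \ldots$ contains every finite binary word, so the sequence $w_n = \bigl(\tfrac12 [1]u^{2n+1}\bigr) \bmod 2$ inherits exactly this property once we check that $\tfrac12[1]u^{2n+1} = b_{f(n)}$ for a suitable increasing bijection $f$ of $\nn$ (coming from the reindexing $N = 2n+1 \mapsto$ length-in-$\Gamma$), and that $f$ is surjective onto the set of realizable $\Gamma$-lengths — which, again by the previous lemma, is exactly $\{L(k,j)\}$, all of one fixed parity after padding. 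The remaining verifications (that $11$ and $3$ suffice, that the padding makes lengths odd, that the count is exactly $2$) are bookkeeping and I would relegate them to the explicit definition of $u$ and a short telescoping computation.
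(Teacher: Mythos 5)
Your general architecture matches the paper's: one group-ring summand per edge of $\Gamma$, of the form $s_i^{-1}\ts r\ts s_j$, with $F_{11}$ holding the eight state letters plus $x,0_x,1_x$ and the $F_3$ factor holding $y,0_y,1_y$. But the proposal is missing the one idea that actually makes the statement of the lemma come out, namely how the factor $2$, the parity of the exponent $2n+1$, and the closing of the path are produced. With $u$ equal to just the sum of the edge terms (with or without an extra ``marker'' generator), $[1]\ts u^n=0$ for all $n$: every product of edge terms reduces to something of the form $s_i^{-1}\,\omega_X\,\omega_Y\, s_j$, and $\Gamma$ has no closed walk with trivial labels, so your claim ``$[1]\ts u^n=b_n$ by a telescoping argument'' is false as stated. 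The paper instead adds the extra summand $2\ts s_8^{-1}s_1$ to $u$. The coefficient $2$ is essential: since no product of edge terms alone equals $1$ (this requires the minimal-length argument showing no product of the $z_i$ has trivial $F_{11}$ component, because the state letters and the tape letters $x,0_x,1_x$ live in the \emph{same} free factor, so ``freeness forbids non-consecutive cancellation'' is not automatic), the closing term must appear at least once; if it appears twice or more the contribution is $0 \bmod 4$; and if it appears exactly once, the $2n+1$ cyclic positions give $\frac12\,[1]\ts u^{2n+1} \equiv (2n+1)\ts b_{2n}\equiv b_{2n} \pmod 2$ --- this is exactly why the exponent is taken odd. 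Your proposed explanations of the $2$ and of the odd exponent (a ``reflection symmetry,'' the $s_7,s_8$ half-speed shuttle, a central involution, or $(g+g^{-1})^{2n+1}$ bookkeeping) do not correspond to any mechanism that works here and would not yield ``$[1]\ts u^{2n+1}$ is always even'' together with the mod-$2$ identification.

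Secondarily, your final reindexing step is also not pinned down: the correct relation is $w_n=b_{2n}$ (every other term of $\bb$), not $b_{f(n)}$ for a bijection $f$ onto all realizable lengths, and the alternative you float (re-pad so that all accepting lengths have one fixed parity) is not carried out. One then still has to check that the subsequence $b_2b_4b_6\ldots$ contains every finite binary word, which follows from the structure of the lengths $L(k,j)$ (consecutive positions spelling $k$ in binary, so one can interleave arbitrary digits and choose $k$ accordingly), but this verification is absent from your plan. As written, the proposal sets up the right encoding but does not prove the lemma.
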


\begin{proof}
We suggestively label the generators of $F_{11}$ as
$\{s_1,s_2,s_3,s_4,s_5,s_6,s_7,s_8,x,0_x,1_x\}$ and
label the generators of $F_3$ as $\{y,0_y,1_y\}$.
Consider the following set $S$ of 19 elements of $G$:
\begin{multicols}{3}
\begin{enumerate}
\item $z_1=s_1^{-1}xys_1$,
\item $z_2=s_1^{-1}s_2$,
\item $z_3=s_2^{-1}1_x^{-1}0_ys_2$,
\item $z_4=s_2^{-1}0_x^{-1}1_ys_3$,
\item $z_5=s_3^{-1}1_x^{-1}1_ys_3$,
\item $z_6=s_3^{-1}0_x^{-1}0_ys_3$,
\item $z_7=s_3^{-1}x^{-1}s_4$,
\item $z_8=s_2^{-1}1_yx^{-1}s_4$,
\item $z_9=s_4^{-1}1_y^{-1}1_xs_4$,
\item $z_{10}=s_4^{-1}0_y^{-1}0_xs_4$,
\item $z_{11}=s_4^{-1}y^{-1}s_5$,
\item $z_{12}=s_5^{-1}s_2$,
\item $z_{13}=s_5^{-1}s_6$,
\item $z_{14}=s_6^{-1}1_x^{-1}s_6$,
\item $z_{15}=s_6^{-1}0_x^{-1}s_6$,
\item $z_{16}=s_6^{-1}1_x^{-1}s_8$,
\item $z_{17}=s_7^{-1}s_8,$
\item $z_{18}=s_8^{-1}1_x^{-1}s_7$,
\item $z_{19}=s_8^{-1}0_x^{-1}s_7$.
\end{enumerate}
\end{multicols}

Let $\Ga$ be as defined in the previous section.
For every edge from $s_i\xrightarrow{r}s_j$ in $\Gamma$, there is one element
of $S$ equal to $s_i^{-1}rs_j$. We show that the number of ways to
multiply $n$ terms from $S$ to get $s_1^{-1}s_8$ is exactly $b_n$.

First, we show that there is no product of terms in $S$ whose $F_{11}$ component is the identity. Assume that such a product exists,
and take one of minimal length. If there are two consecutive terms in this
product such that $s_i$ at the end of one term does not cancel the
$s_j^{-1}$ at the start of the following term, then either the $s_i$
must cancel with a $s_i^{-1}$ before it or the $s_j^{-1}$ must cancel
with a $s_j$ after it. In both cases, this gives a smaller sequence of
terms whose product must have $F_{11}$ component equal to the identity.
If the $s_i$ at the end of each term cancels the $s_j^{-1}$ at
the beginning of the next term, then this product corresponds
to a cycle $\gamma\in\Gamma$ such that $\omega_X(\gamma)$ is the identity.
Straightforward analysis of $\Gamma$ shows that no such cycle exists,
so there is no product of terms in $S$ whose product $F_{11}$ component
equal to the identity.

This also means that the $s_i$ at the end of each term must cancel
the $s_j^{-1}$ at the start of the following term, since otherwise
ether the $s_i$ must cancel with a $s_i^{-1}$ before it or the $s_j^{-1}$
must cancel with a $s_j$ after it, forming a product of terms in $S$
whose $F_{11}$ component is equal to the identity.

Since each $s_i$ cancels with an $s_i^{-1}$ at the start of the following term,
the product must correspond to a path $\gamma\in\Gamma.$ If $\gamma$
is from $s_i$ to $s_j$, the product will evaluate to
$s_i^{-1}\omega_X(\gamma)\omega_Y(\gamma)s_j$.
Therefore, the number of ways to multiply $n$ terms from $S$ to get
$s_1^{-1}s_8$ is equal to $b_n$.

\smallskip
We can now define $u\in \zz[G]$ as
$$
u\. =\. 2s_8^{-1}s_1 \. + \. \sum_{{z_i}\in S} \. z_i\ts.
$$
We claim that $\frac{1}{2} \.[1]\ts u^{2n+1}\ts=\ts b_{2n}\mbox{ mod }2$.
We already showed that one cannot get $1$ by multiplying only elements of~$S$,
so the $2s_8^{-1}s_1$ term must be used at least once.
If this term is used more than once, then the contribution to
$[1]\ts u^{2n+1}$ will be $0 \mbox{ mod }4$.
Therefore, we need only consider the cases where this term is used exactly once,
so $\frac{1}{2}\.[1]\ts u^{2n+1}$ is equal modulo $2$ to the number of products
of the form
\begin{equation}
2=z_{i_1}\ldots z_{i_{k-1}}(2s_8^{-1}s_1)z_{i_{k+1}}\ldots z_{i_{2n+1}}.
\tag{$\star\star$}
\end{equation}
This condition holds if and only if
$$z_{i_{k+1}}\ldots z_{i_{2n+1}}z_{i_1}\ldots z_{i_{k-1}}=s_1^{-1}s_8,$$
which can be achieved in $b_{2n}$ ways.

There are $2n+1$ choices for the location $k$ of the $2s_8^{-1}s_1$ term,
and for each such $k$, there are $b_{2n}$ solutions to $(\star\star)$.
This gives
$$
\frac{1}{2} \. [1]\ts u^{2n+1}\. =\. (2n+1)\ts  b_{2n} \. =\.  b_{2n} \mod 2\ts,
$$
which implies $w_n= b_{2n}$.  By Lemma~$\ref{l:DFA}$, we conclude that ${\bold w}$ is
an infinite binary word which contains every finite binary word as a subword.
\end{proof}

\medskip

\subsection{Counting words mod~2}
We first deduce the main result of this paper and then give a useful minor extension.

\begin{proof}[Proof of Theorem~\ref{t:main}]
The group $\SL(4,\mathbb{Z})$ contains $\SL(2,\mathbb{Z})\times \SL(2,\mathbb{Z})$
as a subgroup. The group $\SL(2,\mathbb{Z})$ contains Sanov's subgroup isomorphic
to~$F_2$, and thus every finitely generated free group $F_\ell$ as a subgroup
(see e.g.~\cite{dlH}).  Therefore, $F_{11}\times F_{3}$ is a subgroup of
$\SL(4,\mathbb{Z})$, and the element $u\in\mathbb{Z}[F_{11}\times F_{3}]$ defined in
Lemma~\ref{l:DFA} can be viewed as an element of $\mathbb{Z}[\SL(4,\mathbb{Z})]$.

Let $a_n=[1]\ts u^n$.  By Lemma~\ref{l:DFA}, the number $a_{2n+1}$ is always even,
and the word ${\bold w}=w_1w_2\ldots$ given by \ts
$w_n= \frac{1}{2} \. a_{2n+1}\mbox{ mod }2$ \ts is an infinite binary word which
contains every finite binary word as a subword. Therefore, by Lemma~\ref{l:mod2},
the sequence $\bigl\{\frac{1}{2} \.a_{2n+1}\bigr\}$ is not P-recursive. Since P-recursivity is
closed under taking a subsequence consisting of every other term, the sequence
$\{a_n\}$ is also not P-recursive.
\end{proof}

\begin{thm}\label{t:two-gen-sets}
There is a group $G\ssu \SL(4,\zz)$ and two generating sets $\<S_1\> = \<S_2\> = G$,
such that for the elements
$$
u_1 \. = \. \sum_{s\in S_1} \. s \,, \qquad  u_2 \. = \. \sum_{s\in S_2}\. s \,,
$$
we have the sequence $\{[1]\ts u_1^n\}$ is P-recursive,
while $\{[1]\ts u_2^n\}$ is not P-recursive.
\end{thm}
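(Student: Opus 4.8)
The plan is to take the element $u\in\mathbb{Z}[F_{11}\times F_3]\subseteq\mathbb{Z}[\SL(4,\mathbb{Z})]$ constructed in Lemma~\ref{l:DFA}, which already yields a non-P-recursive return sequence, and to ``promote'' it to an honest generating function of a group. The obstacle is that $u$ is an integer combination of group elements with a coefficient $2$ (on the term $2s_8^{-1}s_1$) and with various overlaps, so it is not of the form $\sum_{s\in S}s$ for any set $S$; moreover the $z_i$ do not generate all of $F_{11}\times F_3$. Both issues are cosmetic and can be fixed by enlarging the ambient group and the generating set.

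First I would replace $G=F_{11}\times F_3$ by a slightly larger free-times-free group $\wt G \ssu \SL(4,\zz)$, adding a few new free generators $t_1,\ldots,t_r$; since $\SL(4,\zz)$ contains $F_\ell$ for every $\ell$, this stays inside $\SL(4,\zz)$. Take $S_2$ to be the set obtained from the $19$ elements $z_1,\dots,z_{19}$ together with the new generators $t_1,\dots,t_r$ and their inverses, and one additional copy of $s_8^{-1}s_1$. The new generators are chosen so that $\<S_2\>=\wt G$; this costs nothing because any word involving a $t_i$ has nontrivial $\wt G$-component, so $[1]\ts u_2^n$ is unaffected by their presence (they can never contribute to a return path, being on an extra free factor). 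To handle the coefficient $2$, I would note that $2s_8^{-1}s_1$ can be mimicked by taking two formally distinct generators $s_8^{-1}s_1$ (one of them sitting on the original $s_8,s_1$, the other built from a fresh pair of free generators mapped isomorphically), so that the two copies behave identically for the purpose of reaching $s_1^{-1}s_8$ inside the relevant factor but are counted separately. With $S_2$ so defined, $u_2=\sum_{s\in S_2}s$, and the argument of Lemma~\ref{l:DFA} and of the proof of Theorem~\ref{t:main} applies essentially verbatim: $[1]\ts u_2^{2n+1}$ is even, its half is $\equiv b_{2n}\bmod 2$ up to the harmless polynomial factor coming from the number of placements of the distinguished terms, and Lemma~\ref{l:mod2} forces non-P-recursiveness of $\{[1]\ts u_2^n\}$.

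For the P-recursive example $u_1$, I would simply take $S_1$ to be a symmetric generating set of $\wt G$ consisting only of single free generators and their inverses (for instance all the $t_i^{\pm1}$ together with free generators of the $F_{11}$ and $F_3$ factors). Then $\wt G$ is free (a free product of free groups is free), and $u_1=\sum_{s\in S_1}s$ is the standard walk operator on a free group. By Haiman's theorem (the result of~\cite{Hai} cited in the introduction), $\{[1]\ts u_1^n\}$ is P-recursive for every $u_1\in\zz[F_k]$, so in particular for this symmetric $u_1$. This gives two generating sets of the \emph{same} group $\wt G\ssu\SL(4,\zz)$ with the claimed dichotomy.

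The main obstacle I anticipate is the bookkeeping needed to check that introducing the duplicate copy of $s_8^{-1}s_1$ and the extra free generators does not disturb the delicate counting in Lemma~\ref{l:DFA}: one must verify that no path to $s_1^{-1}s_8$ can use an extra generator $t_i^{\pm1}$ (immediate, since these lie on a free factor disjoint from everything in the $z_i$), and that using the two copies of the distinguished term in any combination still contributes $0\bmod 4$ unless exactly one copy is used exactly once, so that the parity analysis reducing $\frac12[1]\ts u_2^{2n+1}$ to $b_{2n}\bmod 2$ survives. Both checks are routine case analyses entirely parallel to those already carried out in the proof of Lemma~\ref{l:DFA}; once they are in place, the conclusion follows from Lemma~\ref{l:mod2} exactly as in the proof of Theorem~\ref{t:main}.
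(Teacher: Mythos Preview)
Your argument for $S_1$ contains a genuine error.  You write that ``$\wt G$ is free (a free product of free groups is free)'', but the group in play is a \emph{direct} product $F_{11}\times F_3$ (possibly enlarged by extra free generators in the factors), not a free product.  For $m,n\ge 2$ the group $F_m\times F_n$ is never free: it contains a copy of~$\zz^2$.  Hence Haiman's theorem for $\zz[F_k]$ does not apply to your~$u_1$, and the P-recursive half of the statement is not established.  The paper deals with this by keeping $G=F_{11}\times F_3$, taking $S_1$ to be the union of the standard generators of the two free factors, and using the exponential-generating-function factorisation
\[
\sum_{n\ge 0}[1]\ts u_1^{\,n}\,\frac{t^n}{n!}\;=\;\Bigl(\sum_{n\ge 0}[1]\ts w_1^{\,n}\,\frac{t^n}{n!}\Bigr)\Bigl(\sum_{n\ge 0}[1]\ts w_2^{\,n}\,\frac{t^n}{n!}\Bigr),
\]
where $w_i$ is the sum of the standard generators of the $i$-th free factor.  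Haiman then applies to each $\{[1]\ts w_i^{\,n}\}$ separately, and closure of P-recursiveness under $c_n\mapsto c_n/n!$, $c_n\mapsto c_n\cdot n!$, and Cauchy product finishes the job.

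On the $S_2$ side there are further gaps.  First, once you put both $t_i$ and $t_i^{-1}$ into $S_2$, products such as $t_i\ts t_i^{-1}$ are equal to~$1$, so your claim that ``any word involving a $t_i$ has nontrivial $\wt G$-component'' is false; the extra generators do perturb $[1]\ts u_2^{\,n}$, and the reduction to Lemma~\ref{l:DFA} is no longer automatic.  Second, your device of ``two formally distinct copies'' of $s_8^{-1}s_1$ is not consistent: a copy built from fresh free generators $s_1',s_8'$ is a genuinely different group element and does not participate in any path reaching $s_1^{-1}s_8$, so it cannot mimic the coefficient~$2$; while two copies of the \emph{same} element is exactly the multiset interpretation you were trying to avoid.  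Third, with the $t_i$ living on an independent free factor you still have not arranged $\langle S_2\rangle=\wt G$, since nothing in $S_2$ generates the original $F_{11}\times F_3$ beyond the subgroup $\langle z_1,\dots,z_{19}\rangle$.  The paper sidesteps all of this by allowing $S_2$ to be a multiset inside the same group $G=F_{11}\times F_3$: one takes $S_2=2S_1\cup S$ (each standard generator twice, together with the nineteen $z_i$), so that $\langle S_2\rangle=G$ trivially, and then observes $u_2\equiv u\pmod 2$ in $\zz[G]$, whence the parity argument from the proof of Theorem~\ref{t:main} carries over.
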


\begin{proof}
Let $G = F_{11} \times F_3$ be as above.  Denote by $X_1$ and~$X_2$ the standard
generating sets of $F_{11}$ and~$F_3$, respectively.  Finally, let
$S_1 = (X\times 1) \cup (1 \times Y)$,
$$w_1=\sum_{x\in X_1} \ts x, \quad w_2=\sum_{x\in X_2} \ts x\ts.
$$
Recall that if $\{c_n\}$ is P-recursive, then so is $\{c_n/n!\}$ and
$\{c_n \cdot n!\}$.  Observe that
$$
\sum_{n=0}^\infty \. [1]\ts u_1^n \ts \frac{t^n}{n!} \, = \,
\left(\sum_{n=0}^\infty \. [1]\ts w_1^n \ts \frac{t^n}{n!}\right)
\left(\sum_{n=0}^\infty \. [1]\ts w_2^n \ts \frac{t^n}{n!}\right),
$$
and that $\{\ts [1]\ts w_1^n\ts\}$ and $\{\ts [1]\ts w_2^n\ts\}$
are P-recursive by Haiman's theorem~\cite{Hai}.  This implies that
$\{\ts [1]\ts u_1^n\ts\}$ is also P-recursive, as desired.

Now, let $S_2 = 2\ts S_1 \cup S$, where $S$ is the set constructed in
the proof of Lemma~\ref{l:DFA}, and $2\ts S_1$ means that each element
of~$S_1$ is taken twice.  Observe that \ts
$[1]\ts u_2^n \. = \. [1]\ts u^n\mbox{ mod }2$, where $u$ is as in 
the proof of Theorem~\ref{t:main}.  This implies that
$\{\ts [1]\ts u_1^n\ts\}$ is not P-recursive,
and finishes the proof.
\end{proof}

\bigskip

\section{Asymptotics of P-recursive sequences and the return probabilities}\label{s:prob}

\bigskip

\subsection{Asymptotics}\label{ss:prob-asy}
The asymptotics of general P-recursive sequences is undersood to be a finite
sum of the terms
$$
A \. (n!)^s \. \la^n \. e^{Q(n^\ga)} \. n^\al \. (\log n)^\be\ts,
$$
where $s,\ga \in \qqq$, $\al, \la \in \overline \qqq$,
$\be \in \nn$, and $Q(\cdot)$ is a polynomial.
This result goes back to Birkhoff and
Trjitzinsky~(1932), and also Turrittin~(1960).  Although
there are several gaps in these proofs, they are closed now,
notably in~\cite{Imm}.
We refer to~\cite[$\S$VIII.7]{FS}, \cite[$\S$9.2]{Odl} and~\cite{Pak}
for various formulations of general asymptotic estimates,
an extensive discussion of priority issues and further references.

For the integer P-recursive sequences which grow at most exponentially,
the asymptotics have further constraints summarized in the following theorem.

\begin{thm}% [Many authors, see below]
\label{t:asy}
Let $\{a_n\}$ be an integer P-recursive sequence defined by $(\ast)$, and such that
$a_n < C^n$ for some $C>0$ and all $n \ge 1$.  Then
$$
a_n \, \sim \, \sum_{i=1}^m \. A_i \. \la_i^n \. n^{\al_i} \. (\log n)^{\be_i}\,,
$$
where $\al_i \in \qqq$, $\la_i \in\overline{\qqq}$ and $\be_i \in \nn$.
\end{thm}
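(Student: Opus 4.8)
The plan is to start from the Birkhoff--Trjitzinsky--Turrittin structure theorem quoted above, which gives
$$
a_n \, \sim \, \sum_j \. A_j \. (n!)^{s_j} \. \la_j^n \. e^{Q_j(n^{\ga_j})} \. n^{\al_j} \. (\log n)^{\be_j}\ts,
$$
and then to show that the growth bound $a_n < C^n$ forces every exotic factor to trivialize. First I would observe that the terms in the asymptotic expansion can be grouped by their \emph{dominant} rate of growth; since $\{a_n\}$ is a genuine sequence with a finite limit expansion, the leading group of terms (those maximizing the growth) cannot cancel, so it alone must satisfy $a_n < C^n$. Hence it suffices to rule out $(n!)^s$ with $s>0$, any subexponential-but-superpolynomial factor $e^{Q(n^\ga)}$ with $\ga>0$ and $Q$ nonconstant of positive leading coefficient, and to pin down $s=0$, and also to handle the $s<0$ and decaying-$e^{Q}$ cases, which would make $a_n\to 0$ and hence $a_n=0$ eventually (absorbed into a shorter recurrence, or trivially of the claimed form).

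The key steps, in order: (1) \textbf{Eliminate factorial growth.} If the dominant group has $s_j=s>0$ for the maximal $s$ appearing, then $a_n$ grows like $(n!)^s$ times lower-order factors, which exceeds $C^n$ for every fixed $C$; contradiction. If the dominant $s$ is negative, $a_n\to 0$, so $a_n=0$ for large $n$ and the conclusion holds vacuously (empty sum, or a single term with $\la=0$). So $s=0$. (2) \textbf{Eliminate stretched-exponential factors.} With $s=0$, suppose some surviving term has a factor $e^{Q(n^\ga)}$ with $\ga\in(0,1)\cap\qqq$ and $Q$ a nonconstant polynomial. Among the dominant terms, order by the pair $(\la\text{-modulus}, \text{leading behavior of }Q)$: the terms with the largest $|\la|$ and, among those, the fastest-growing $e^{Q}$ form a subsum that cannot be cancelled by the others (the relevant exponential-sum / distinct-frequency argument, essentially that functions $e^{Q_j(n^{\ga_j})}n^{\al_j}(\log n)^{\be_j}$ with distinct growth profiles are asymptotically independent). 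If the leading coefficient of that $Q$ is positive, $a_n$ beats $C^n$; if negative, that subsum decays faster than the next group, so it is not actually dominant — contradiction unless no such factor is present. Thus every surviving term has $\ga=0$, i.e.\ $e^{Q(n^\ga)}$ is a constant absorbed into $A_j$. (3) \textbf{Rationality of the exponents.} After (1)--(2) we have $a_n \sim \sum_j A_j \la_j^n n^{\al_j}(\log n)^{\be_j}$ with $\la_j\in\overline\qqq$, $\be_j\in\nn$ already from the general theorem, and it remains to argue $\al_j\in\qqq$. Here I would use that the $\al_j$ arising in the Birkhoff--Trjitzinsky expansion of a recurrence with \emph{polynomial} (hence rational) coefficients $q_i(x)\in\zz[x]$ are rational: they are determined, via the indicial equation at $n=\infty$, by ratios of leading coefficients and degrees of the $q_i$, all of which are rational. (Equivalently, cite the refined form in \cite{Imm} or \cite[$\S$VIII.7]{FS} which records $\al\in\qqq$ for equations with polynomial coefficients once the $(n!)^s$ and $e^{Q}$ factors are absent.)

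The main obstacle I expect is step (2): making rigorous the claim that a dominant subsum of terms $A_j\la_j^n e^{Q_j(n^{\ga_j})}n^{\al_j}(\log n)^{\be_j}$ cannot cancel, so that the growth bound really constrains the fastest-growing constituent rather than being achieved only through delicate cancellation. This is the standard ``asymptotic independence of distinct exponential scales'' phenomenon, but one must set it up carefully: fix the maximal $|\la|$, then within that the dominant $\ga$ and dominant leading term of $Q$, and show that dividing through by that single template leaves a sum that genuinely tends to a nonzero constant (or oscillates without going to $0$), using that the remaining frequencies/profiles are strictly smaller. A secondary nuisance is bookkeeping the complex $\la_j$ of equal modulus: their contribution is an almost-periodic function times $n^{\al}(\log n)^\be e^{Q}$, which does not decay, so it cannot rescue a violated bound — this needs a one-line lower-bound-along-a-subsequence argument (e.g.\ Weyl equidistribution of the arguments, or simply that a nonzero exponential polynomial in $n$ is $\Omega(1)$ along a positive-density set of $n$). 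Everything else is routine once these two points are in place.
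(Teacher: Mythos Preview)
Your approach has a genuine gap at step~(2), and the paper's proof takes a completely different route.

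The bound $a_n < C^n$ \emph{for some} $C>0$ does not rule out stretched-exponential factors $e^{Q(n^{\ga})}$ with $\ga\in(0,1)$.  Indeed, any term of the form $A\,\la^n e^{c\,n^{\ga}} n^{\al}(\log n)^{\be}$ with $0<\ga<1$ is $o(C^n)$ for every $C>|\la|$, because $(|\la|/C)^n$ decays exponentially while $e^{c\,n^{\ga}}$ grows only subexponentially.  So the hypothesis is perfectly compatible with such terms, and your contradiction in~(2) never materializes.  The paper itself makes this explicit in~\S\ref{ss:fin-asy}: the rational P-recursive sequence $a_n/n!$ for fragmented permutations satisfies $a_n/n! \sim \tfrac{1}{2\sqrt{e\pi}}\,e^{2\sqrt{n}}\,n^{-3/4}$, which is at most exponential yet has a stretched-exponential factor.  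What kills such terms is not the growth bound but the \emph{integrality} of the~$a_n$, which your argument never uses.  Step~(3) is also not right as stated: the indicial polynomial has rational coefficients, but its roots are merely algebraic, not rational in general; rationality of the~$\al_i$ is a deep arithmetic fact, not an elementary consequence of $q_i\in\zz[x]$.

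The paper's argument is of an entirely different nature.  The combination of integer coefficients and at most exponential growth means that $\ca(t)=\sum a_n t^n$ is a $G$-function in the sense of Siegel.  One then invokes the theorems of Andr\'{e}, Bombieri, Chudnovsky, Dwork and Katz to conclude that the ODE satisfied by a $G$-function has only \emph{regular} singular points with \emph{rational} exponents.  In the regular case the Birkhoff--Trjitzinsky analysis gives no $(n!)^s$ or $e^{Q(n^{\ga})}$ factors and the exponents~$\al_i$ are rational, yielding the stated form.  In short, the integrality hypothesis is doing essentially all the work, via number-theoretic machinery that has no elementary substitute along the lines you propose.
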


The theorem is a combination of several known results.  Briefly, the generating
series $\ca(t)$ is a $G$-functions in a sense of Siegel~(1929),
which by the works of  Andr\'{e}, Bombieri, Chudnovsky, Dwork and Katz,
must satisfy an ODE which has only regular singular points and rational
exponents (see a discussion on~\cite[p.~719]{And} and
an overview in~\cite{Beu}).  We then apply the Birkhoff--Trjitzinsky theorem,
which in the regular case has a complete and self-contained proof
(see Theorem~VII.10 and subsequent comments in~\cite{FS}).  We refer
to~\cite{Pak} for further references and details.

\smallskip

\subsection{Probability of return}\label{ss:prob-return}
Let $G$ be a finitely generated group.  A generating set $S$ is called
\emph{symmetric} if $S=S^{-1}$.  Let $H$ be a subgroup of $G$ of finite
index.  It was shown by Pittet and 
Saloff-Coste~\cite{PS1}, that for two symmetric generating sets
$\<S\>=G$ and $\<S'\> = H$ we have
$$(\di)  \qquad
C_1 \ts p_{G,S}(\al_1 n) \. < \. p_{G,S'}(n) \.
< \. C_2 \ts p_{G,S}(\al_2 n)\ts,
$$
for all $n>0$ and fixed constants $C_1,C_2,\al_1,\al_2 >0$.
For $G=H$, this shows, qualitatively, that the asymptotic behavior 
of $p_{G,S}(n)$ is a property of a group.  
The following result gives a complete answer
for a large class of groups.

\begin{thm} %[Many authors, see below]
\label{t:solv}
Let $G$ be an amenable subgroup of $\GL(k,\zz)$ and $S$ is a
symmetric generating set.  Then either $G$ has polynomial growth
and polynomial return probabilities:
$$
A_1 \ts n^{-d} \.  <  \. p_{G,S}(2n) \. < \. A_2 \ts n^{-d}\ts,
$$
or $G$ has  exponential growth and mildly exponential return
probabilities:
$$
A_1 \ts \rho_1^{\sqrt[3]{n}} \. < \. p_{G,S}(2n) \. < \. A_2 \ts \rho_2^{\sqrt[3]{n}}\ts,
$$
for some $A_1,A_2>0$, $0< \rho_1,\rho_2<1$, and $d\in \nn$.
\end{thm}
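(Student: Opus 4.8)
The plan is to establish Theorem~\ref{t:solv} by separating into the two cases provided by the structure theory of amenable linear groups, and then invoking the appropriate known asymptotic estimates for return probabilities in each case, finally using the quasi-isometry invariance statement $(\di)$ of Pittet--Saloff-Coste to reduce from an arbitrary finite-index situation to a convenient choice of group and generating set.

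First I would recall the Tits alternative: a finitely generated linear group over a field of characteristic zero is either virtually solvable or contains a nonabelian free subgroup. Since $G \ssu \GL(k,\zz)$ is amenable, it cannot contain a nonabelian free subgroup, so $G$ is virtually solvable. Moreover, a finitely generated virtually solvable linear group is virtually polycyclic by a theorem of Mal'cev (a solvable subgroup of $\GL(k,\zz)$ satisfies the maximal condition on subgroups, hence is polycyclic; in general one gets a polycyclic subgroup of finite index). So $G$ has a polycyclic subgroup $H$ of finite index. At this point there is the familiar dichotomy for polycyclic groups: either $H$ has polynomial growth (equivalently $H$ is virtually nilpotent, by Wolf's theorem combined with Milnor's), or $H$ has exponential growth. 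By $(\di)$, the asymptotics of $p_{G,S}(n)$ up to the indicated rescalings depends only on the commensurability class of $G$, so it suffices to prove the two displayed estimates for a single well-chosen $(H,S')$ in each case.

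In the polynomial-growth case, $G$ is virtually nilpotent, and the sharp on-diagonal estimate $p_{G,S}(2n) \asymp n^{-d/2}$, where $d$ is the degree of polynomial growth (which lies in $\nn$ by the Bass--Guivarc'h formula), is classical: it follows from Varopoulos's theory relating isoperimetric profile to return probability, together with the matching lower bound via, e.g., the work of Hebisch--Saloff-Coste. (The parameter $d$ in the statement is then half the growth degree, or one absorbs the factor of two; in any case it is a nonnegative integer.) In the exponential-growth case, $G$ is virtually polycyclic of exponential growth, and the two-sided bound $\exp(-c_1 n^{1/3}) < p_{G,S}(2n) < \exp(-c_2 n^{1/3})$ is exactly the Pittet--Saloff-Coste theorem on return probabilities for polycyclic groups (the $n^{1/3}$ exponent being the signature of such groups); one cites this directly. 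Rewriting $\exp(-c n^{1/3})$ as $\rho^{\sqrt[3]{n}}$ with $\rho = e^{-c} \in (0,1)$ gives the stated form.

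The main obstacle is not any single estimate — each of the three ingredients (Tits alternative plus Mal'cev giving virtual polycyclicity, the Varopoulos-type $n^{-d/2}$ bound in the nilpotent case, and the $\exp(-n^{1/3})$ bound in the polycyclic case) is a cited theorem — but rather assembling them cleanly: one must check that amenability is used only through the Tits alternative, that the finite-index passages are all covered by $(\di)$ (which requires the generating sets to be symmetric, as assumed), and that the growth dichotomy for the virtually polycyclic group $G$ is genuinely exhaustive with no intermediate case. I would also need to be slightly careful that $(\di)$ as stated compares a group with a finite-index subgroup in one direction; since commensurability is generated by such steps and the estimates are stable under the allowed linear rescaling of $n$, this is enough. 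So the write-up is mostly a matter of correctly quoting \cite{PS1} and the growth/return-probability literature and verifying the reductions, rather than proving anything new.
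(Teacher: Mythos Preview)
Your proposal is correct and follows essentially the same route as the paper: Tits alternative gives virtual solvability, Mal'cev gives (virtual) polycyclicity, the growth dichotomy splits into the polynomial and exponential cases, and one then quotes the known two-sided return-probability bounds in each case, with $(\di)$ handling the finite-index reductions. The only differences are in attribution: the paper credits the polynomial-growth lower bound to the CLT of Cr\'{e}pel--Raugi and the upper bound to Varopoulos via Nash inequalities, and in the exponential-growth polycyclic case credits the upper bound to Varopoulos and the lower bound to Alexopoulos, rather than packaging these as a single Pittet--Saloff-Coste or Hebisch--Saloff-Coste theorem.
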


The  theorem is again a combination of several known results.  Briefly,
by the Tits alternative, group $G$ must be virtually solvable, which implies that
it either has a polynomial or exponential growth (see e.g.~\cite{dlH}).
By the \emph{quasi-isometry}~$(\di)$, we can assume that~$G$ is solvable.  In the
polynomial case, the lower bound follows from the CLT by Cr\'{e}pel and Raugi~\cite{CR}, 
while the upper bound was proved by Varopoulos using the
Nash inequality~\cite{V1} (see also~\cite{V2}).  
For the more relevant to us case of exponential
growth, recall Mal'tsev's theorem,  which says that
all solvable subgroups of $\SL(n,\zz)$ are polycyclic
(see e.g.~\cite[Thm.~22.7]{Sup}). For polycyclic groups of exponential growth,
the upper bound is due to Varopoulos~\cite{Var} and the lower bound is due to
Alexopoulos~\cite{Ale}.  We refer to~\cite{PS} and~\cite[$\S$15]{Woe}
for proofs and further references, and to~\cite{PS-proc} for a generalization
to discrete subgroups of groups of Lie type.  

\smallskip

\subsection{Applications to P-recursiveness}\label{ss:prob-P-rec}
We can now show that non-P-recursiveness for amenable linear groups
of exponential growth.

\begin{thm}  \label{t:prob}
Let $G$ be an amenable subgroup of $\GL(k,\zz)$ of exponential growth,
and let $S$ be a symmetric generating set. Then the probability of return
sequence $\bigl\{p_{G,S}(n)\bigr\}$ is not P-recursive.
\end{thm}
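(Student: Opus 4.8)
The plan is a proof by contradiction that assembles Theorems~\ref{t:asy} and~\ref{t:solv}. Suppose $\{p_{G,S}(n)\}$ is P-recursive and put $a_n=[1]\ts u^n$ with $u=\sum_{s\in S}s$, so that $a_n=|S|^n\ts p_{G,S}(n)$. Substituting $p_{G,S}(n-j)=a_{n-j}/|S|^{n-j}$ into a linear recurrence for $\{p_{G,S}(n)\}$ and multiplying through by $|S|^n$ produces a linear recurrence with polynomial coefficients for $\{a_n\}$, so $\{a_n\}$ is P-recursive; hence so is the even subsequence $\{a_{2n}\}$. This is a sequence of positive integers (the walk returns via $s,s^{-1},s,s^{-1},\ldots$) with $a_{2n}\le|S|^{2n}$, so it grows at most exponentially and Theorem~\ref{t:asy} applies to it. Applying that theorem to $\{a_{2n}\}$ and dividing by $|S|^{2n}$ gives
$$
p_{G,S}(2n)\ \sim\ g(n)\ :=\ \sum_{i=1}^m A_i\ts\mu_i^n\ts n^{\al_i}\ts(\log n)^{\be_i},
$$
where we may assume the triples $(\mu_i,\al_i,\be_i)$ are distinct, $A_i\neq0$, $\al_i\in\qqq$, $\be_i\in\nn$, $\mu_i\in\cc$, and $m\ge1$ (the sum is nonempty since $a_{2n}\ge1$). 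In particular $g(n)>0$ for all large~$n$.

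The core of the argument is to show that no function of this shape can be trapped between the bounds $c_1\ts\rho_1^{\sqrt[3]{n}}<p_{G,S}(2n)<c_2\ts\rho_2^{\sqrt[3]{n}}$ that Theorem~\ref{t:solv} provides (recall $G$ has exponential growth, so we land in the second alternative of that theorem). Let $\mu=\max_i|\mu_i|$ and let $J$ collect those indices~$i$ with $|\mu_i|=\mu$ whose pair $(\al_i,\be_i)$ is lexicographically largest, say equal to $(\al,\be)$; write $\mu_j=\mu\ts\zeta_j$ for $j\in J$, where the $\zeta_j$ are distinct of modulus~$1$. Factoring out $\mu^n n^{\al}(\log n)^{\be}$ and observing that every remaining term picks up either a geometrically decaying factor $(|\mu_i|/\mu)^n$ or a factor $n^{\al_i-\al}(\log n)^{\be_i-\be}\to0$, one obtains
$$
g(n)\ =\ \mu^n\ts n^{\al}\ts(\log n)^{\be}\bigl(\vp(n)+o(1)\bigr),\qquad \vp(n)=\sum_{j\in J}A_j\ts\zeta_j^n.
$$
The key elementary point is that the trigonometric polynomial $\vp$ does not die out along the integers: by orthogonality of characters, $\tfrac1N\sum_{n=1}^N|\vp(n)|^2\to\sum_{j\in J}|A_j|^2>0$, so $c:=\limsup_n|\vp(n)|>0$, and there is a subsequence $n_k\uparrow\infty$ with $|\vp(n_k)|\ge c/2$.

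The last step is to split on the size of $\mu$ and invoke the two sides of Theorem~\ref{t:solv}. If $\mu\ge1$, then $g(n_k)=|g(n_k)|\ge\tfrac c4\ts n_k^{\al}(\log n_k)^{\be}$ for large~$k$, while $g(n_k)\sim p_{G,S}(2n_k)<c_2\ts\rho_2^{\sqrt[3]{n_k}}$; this is impossible since $n^{\al}(\log n)^{\be}/\rho_2^{\sqrt[3]{n}}\to\infty$ for every fixed $\al\in\qqq$, $\be\in\nn$. If $\mu<1$, fix $\mu<\mu'<1$; then $|g(n)|\le\bigl(\sum_i|A_i|\bigr)\ts\mu^n\ts n^{\al^+}(\log n)^{\be^+}=o\bigl((\mu')^n\bigr)=o\bigl(\rho_1^{\sqrt[3]{n}}\bigr)$, where $\al^+=\max_i\max(\al_i,0)$, $\be^+=\max_i\be_i$, and $(\mu')^n/\rho_1^{\sqrt[3]{n}}\to0$ because $\log\mu'<0$; this contradicts $g(n)\sim p_{G,S}(2n)>c_1\ts\rho_1^{\sqrt[3]{n}}$. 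Either way the assumption collapses, which proves the theorem. The one place I expect real care to be needed is precisely this final comparison — the non-vanishing of $\vp$ along $\nn$ together with the routine but fiddly estimates relating $(\mu')^n$, polynomial factors, and $\rho^{\sqrt[3]{n}}$; everything else is bookkeeping plus the already-quoted Theorems~\ref{t:asy} and~\ref{t:solv}.
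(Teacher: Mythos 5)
Your proof is correct and takes essentially the same route as the paper: assume P-recursiveness, pass to the integer sequence $a_{2n}=|S|^{2n}\ts p_{G,S}(2n)$, apply Theorem~\ref{t:asy}, and derive a contradiction with the mildly exponential sandwich $\rho_1^{\sqrt[3]{n}}\lesssim p_{G,S}(2n)\lesssim\rho_2^{\sqrt[3]{n}}$ of Theorem~\ref{t:solv}. The only difference is that you carefully spell out (dominant-term extraction, non-vanishing of the trigonometric polynomial, the two cases $\mu\ge 1$ and $\mu<1$) the incompatibility that the paper's proof asserts in a single sentence.
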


\begin{proof}
It is easy to see that $H$ has exponential growth, so Theorem~\ref{t:solv}
applies.  Let $a_n=|S|^n\ts p_{G,S}(n) \in \nn$ as in the introduction.
If $\{p_{G,S}(n)\}$ is P-recursive, then so is~$\{a_{2n}\}$.  On the other hand,
Theorem~\ref{t:asy} forbids mildly exponential terms $\rho^{\sqrt[3]{n}}$
in the asymptotics of~$a_{2n}$, giving a contradiction.
\end{proof}

To obtain Theorem~\ref{t:main} from here, consider the following linear
group $H \ssu \SL(3,\zz)$ of exponential growth:
$$
H \ = \
\left\{ \ \begin{pmatrix}
x_{1,1} & x_{1,2} & y_{1} \\
x_{2,1} & x_{2,2} & y_{2} \\
0 & 0 & 1 \end{pmatrix}
\quad \text{s.t.} \ \ \begin{pmatrix}
x_{1,1} & x_{1,2} \\
x_{2,1} & x_{2,2} \end{pmatrix} =
\begin{pmatrix}
2 & 1 \\
1 & 1 \end{pmatrix}^k, \ k \in \zz
\ \right\}
$$
(see e.g.~\cite[$\S$15.B]{Woe}).
Observe that $H \simeq \zz \ltimes\zz^2$, and therefore solvable.
Thus, $H$ has a natural symmetric generating set
$$
E \ = \ \left\{ \ \begin{pmatrix}
2 & 1 & 0 \\
1 & 1 & 0 \\
0 & 0 & 1 \end{pmatrix}^{\pm 1},  \
\begin{pmatrix}
1 & 0 & \pm 1 \\
0 & 1 & 0 \\
0 & 0 & 1 \end{pmatrix}, \
\begin{pmatrix}
1 & 0 & 0 \\
0 & 1 & \pm 1 \\
0 & 0 & 1 \end{pmatrix}
\ \right\}.
$$
By Theorem~\ref{t:prob}, the probability of return
sequence $\bigl\{p_{H,E}(n)\bigr\}$ is not P-recursive,
as desired.

\bigskip

\section{Final Remarks}\label{s:fin}

\subsection{}\label{ss:fin-kon}
Kontsevich's question was originally motivated by related questions
on the ``categorical entropy''~\cite{DHKK}.  In response to the
draft of this paper, Ludmil Katzarkov, Maxim Kontsevich and Richard
Stanley asked us if the examples we construct satisfy
\emph{algebraic differential equations} (ADE),
see e.g.~\cite[Exc.~6.63]{Sta}.  We believe that the answer is No,
and plan to explore this problem in the future.

\subsection{}\label
{ss:fin-undecide}
\ts The motivation behind the proof of Theorem~\ref{t:main} lies in the classical
result of Miha{\u\i}lova that $G=F_2 \times F_2$ has an undecidable
group membership problem~\cite{Mih}.  In fact, we conjecture that the problem
whether $\bigl\{\.[1] \. u^n\bigr\}$ is P-recursive is undecidable.
We refer to~\cite{Hal} for an extensive
survey of decidable and undecidable matrix problems.

\subsection{}\label{ss:fin-group}
\ts  Following the approach of the previous section,
Theorem~\ref{t:prob} can be extended to all
polycyclic groups of exponential growth
% , 
% discrete subgroups of groups of Lie type~\cite{PS-proc}, 
and solvable groups of finite Pr\"ufer rank~\cite{PS-ems}. 
It also applies to various other specific groups for which 
mildly exponential bounds on $p(n)$ are known, 
such as the \emph{Baumslag--Solitar groups}
$\BS_q \ssu \GL(2,\qqq)$, $q\ge 2$, and the \emph{lamplighter groups}
$L_d = \zz_2 \wr \zz^d$, $d\ge 1$, see e.g.~\cite[$\S$15]{Woe}.
Let us emphasize that P-recursiveness fails for \emph{all}
symmetric generating sets in these cases.  In view of
Theorem~\ref{t:two-gen-sets}, the P-recursiveness fails
for \emph{some} generating sets of non-amenable groups containing
$F_2 \times F_2$.
This suggests that P-recursiveness of all generating sets is
a rigid property which holds for very few classes of group.
We conjecture that it holds for all  nilpotent groups.

% In fact, the underlying group matters very little in our
% argument.  By adding generators of $\SL(4,\zz)$ with even
% multiplicities one can make a Cayley graph with
% non-P-recursive probabilities of return.  This implies
% that P-recursive is not a group property, but depends also
% on a generating set.  It would be interesting to see such
% examples for symmetric generating sets as well.

\subsection{}\label{ss:fin-complexity}
\ts  Lemma~\ref{l:mod2} can be rephrased to say that the
\emph{subword complexity function} $c_{\bw}(n)  < 2^n$ for some~$n$
large enough (see e.g.~\cite{AS,BLRS}).  This is likely to be
far from optimal.  For example, for the Catalan numbers
$C_n = \frac1{n+1}\binom{2n}{n}$,
we have $\bw = 101000100000001\ldots$ In this case,
it is easy to see that the word complexity function $c_{\bw}(n) = \Theta(n)$,
cf.~\cite{DS}.   It would be interesting to find sharper upper
bounds on the maximal growth of $c_{\bw}(n)$, when $\bw$ is
the infinite parity word of a P-recursive sequence.  Note that
$c_{\bw}(n) = \Theta(n)$ for all automatic sequences~\cite[$\S$10.2]{AS},
and that the exponentially growing P-recursive
sequences modulo almost all primes are automatic provided
deep conjectures of Bombieri and Dwork, see~\cite{Chr}.

\subsection{}\label{ss:fin-asy}
\ts The integrality assumption in Theorem~\ref{t:asy}
cannot be removed as the following example shows.
Denote by $a_n$ the number of \emph{fragmented permutations},
defined as partitions of $\{1,\ldots,n\}$ into ordered lists of numbers
(see sequence {\tt A000262} in~\cite{OEIS}).  It is P-recursive since
$$
a_n \. = \. (2\ts n-1)\ts a_{n-1} \. - \. (n-1)(n-2)\ts a_{n-2} \quad \text{for all} \ \, \.n>2\ts.
$$
The asymptotics is given in~\cite[Prop.~VIII.4]{FS}:
$$
\frac{a_n}{n!} \, \sim \, \frac{1}{2\ts\sqrt{e\ts\pi}} \. e^{2\sqrt{n}} \. n^{-3/4}\ts.
$$
This implies that the theorem is false for the \emph{rational}, at most exponential
P-recursive sequence $\{a_n/n!\}$, since in this case we have mildly exponential terms.
To understand this, note that \ts $\sum_n \ts a_n \ts t^n/n!$ \ts is not a $G$-function
since the \ts $lcm$ \ts of denominators of $a_n/n!$ grow superexponentially.

\subsection{}\label{ss:fin-dfin}  \ts
Proving that a combinatorial sequence is not P-recursive is often difficult
even in the most classical cases.  We refer to~\cite{B+,BRS,BP,FGS,Kla,MR} for
various analytic arguments.  As far as we know, this is the first
proof by a computability argument.
% Curiously, we found further applications
% of Lemma~\ref{l:mod2} to disprove the Noonan--Zeilberger conjecture~\cite{GP}.

\vskip.5cm

{\small 
\noindent
\textbf{Acknowledgments:} \. We are grateful to Misha Ershov,
Martin Kassabov, Maxim Kontsevich, Andrew Marks, Marni Mishna,
Robin Pemantle, Bruno Salvy, Andy Soffer, Jed Yang and
Doron Zeilberger for interesting discussions.
Special thanks to Jean-Paul Allouche, Matthias Aschenbrenner,
Cyril Banderier, Alin Bostan, Mireille Bousquet-M\'{e}lou,
Martin Kassabov, Nick Katz, Christophe Pittet, 
Laurent Saloff-Coste and Richard Stanley, for many useful
remarks on the early draft of the paper, and help with the references.
The first author was partially supported by the University of California
Eugene V.~Cota-Robles Fellowship; the second author was partially
supported by the~NSF.

}

%\newpage

\vskip1.1cm

\end{document}